\pgfplotsset{compat=1.14}
\newcommand{\sheafhom}{\mathscr{H}\text{\kern -3pt {\calligra\large om}}\,}
\newcommand{\sheafext}{\mathscr{E}\text{\kern -3pt {\calligra\large xt}}\,}
\newcommand{\Aut}{\textnormal{Aut}}
\newtheorem{theo}{Theorem}[section]
\newtheorem{theorem}[theo]{Theorem}
\newtheorem*{theorem*}{Theorem}
\newtheorem{proposition}[theo]{Proposition}
\newtheorem*{proposition*}{Proposition}
\newtheorem*{lemma*}{Lemma}
\newtheorem*{corollary*}{Corollary}
\newtheorem*{conjecture*}{Conjecture}
\theoremstyle{definition}
\newtheorem{definition}[theo]{Definition}
\newtheorem*{definition*}{Definition}
\newtheorem{example}[theo]{Example}
\newtheorem*{example*}{Example}
\newtheorem{problem}[theo]{Problem}
\newtheorem*{problem*}{Problem}
\newtheorem{notation}[theo]{Notation}
\newtheorem{remark}[theo]{Remark}
\newtheorem*{remark*}{Remark}
\algnewcommand\algorithmicforeach{\textbf{for each}}
\begin{document}
\title[Maximal skew sets of lines on a Hermitian surface]{Maximal skew sets of lines on a Hermitian surface and a modified Bron-Kerbosch algorithm}
\author{Anna Brosowsky, Haoyu Du, Madhav Krishna, Sandra Nair, Janet Page, and Tim Ryan}
\address{ }
\email{ }
%\begin{abstract} In this paper, we study maximal sets of skew lines on Hermitian surfaces. We give a new algorithm to compute these sets and give some computational results for Hermitian surfaces of degrees 3,4, and 5. In more generality, this algorithm solves a new variant of the clique listing problem, which may be more approachable than the classical problem. Finally, we explicitly construct a large skew set of lines on Hermitian varieties of any degree and use it to give a lower bound on the largest size of maximal skew sets and a lower bound on the possible number of maximal skew sets. \end{abstract}
\maketitle

% ----------------------------------------------------------------
% ----------------------------------------------------------------
% ----------------------------------------------------------------
\section{Introduction}
The goal of this paper is to study maximal sets of skew lines on certain algebraic surfaces, and motivated by this study, to propose a variant of the clique listing problem from computer science and combinatorics and an algorithm which solves that problem.

Let $k$ be a field extension of $\mathbb{F}_{q^2}$ where $q = p^e$ for a prime $p$, and define the Fermat surface of degree $q+1$ to be the surface $X$ in $\mathbb{P}^3$ defined by the vanishing of the polynomial $x^{q+1}+y^{q+1}+z^{q+1}+w^{q+1}$, i.e.,
\[X = \mathbb{V}\left(x^{q+1}+y^{q+1}+z^{q+1}+w^{q+1}\right).\]
These surfaces, and the surfaces projectively equivalent to them, have long been objects of study in algebraic geometry, commutative algebra, and combinatorics.
Over $k = \mathbb{F}_{q^2}$, the surfaces projectively equivalent to a Fermat surface of degree $q+1$ are known as \emph{Hermitian surfaces}; these have been extensively studied due to the Frobenius automorphism, $[x:y:z:w] \mapsto [x^q:y^q:z^q:w^q]$, being an involution akin to complex conjugation, see \cite{Segre,BC,Hirschfeld} for some of the foundational work.
More recently, over any field of characteristic $p >0$, these have been studied because they fit into a larger class of surfaces called \emph{extremal surfaces}, which were introduced in \cite{extremal} because their affine cones have ``most singular'' possible cone points among all surfaces of the same degree (as measured by an invariant known as the $F$-pure threshold).  Over an algebraically closed field, all \emph{smooth} extremal surfaces are projectively equivalent to the Fermat surface.

One fascinating facet of the geometry of these surfaces is the presence of many lines on them despite their arbitrarily high degree. From an algebraic geometry viewpoint, these lines are interesting as they exceed the bound of lines possible on a surface of a given degree in characteristic zero \cite{Segre,BauerandRams} and conjecturally, may provide
the maximum number of lines possible on a surface of a given degree in characteristic $p > 0$. 
From a combinatorial viewpoint, these lines are interesting as they are a geometric realization of the \emph{Hermitian generalized quadrangle} $\mathcal{H}(3,q^2)$, which is a particular incidence structure that is a prototypical example of a generalized quadrangle. 
These viewpoints converge in their interest in maximal sets of skew lines on the surface.
In algebraic geometry, these generalize the classical constructions of six skew lines on a smooth cubic surface, while in combinatorics these have been extensively studied as ``maximal partial spreads'', e.g., \cite{ACE,BC,Cossidente,CossidentePavese2,EbertHirschfeld,GLSV,MetschStorme}.
Even the most straightforward questions about these maximal sets of skew lines are unknown: \textit{What are the possible sizes of maximal skew sets of lines on a Hermitian surface $X$ of degree $q+1$, and how many maximal skew sets are there of each size?}
This is equivalent to asking what are the sizes of maximal partial spreads in the Hermitian generalized quadrangle $\mathcal{H}(3,q^2)$ and how many of them are there.
In this paper, we use new methods to list all sizes for maximal skew sets of lines on $X$ when $q=2,3,$ and $4$, which were previously known. Further, we count the number of these maximal skew sets when $q=3$.
\begin{proposition*}
The number of maximal sets of skew lines of each size on $X$ is known for $q=3$.
\end{proposition*}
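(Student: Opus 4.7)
The plan is to recast the enumeration as a clique-listing problem on a finite graph and attack it computationally, using the modified Bron-Kerbosch algorithm introduced later in the paper. To $X$ I associate its \emph{skew graph} $\Gamma = \Gamma(X)$, whose vertex set is the set of lines contained in $X$ and whose edges join pairs of disjoint (skew) lines. By construction, maximal skew sets of lines on $X$ correspond bijectively to maximal cliques of $\Gamma$, so it suffices to enumerate the maximal cliques of $\Gamma$ and tabulate them by size.

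The first step is to produce $\Gamma$ explicitly for $q=3$. Working over $\FF_9$, I would enumerate the $(q+1)(q^3+1)=112$ lines lying on $X=\VV(x^4+y^4+z^4+w^4)$; each is $\FF_9$-rational because $X$ is Hermitian. For each unordered pair of lines I would decide whether they meet by testing whether the $4\times 4$ matrix built from bases of their two $2$-dimensional subspaces of $\FF_9^4$ is singular, producing the full adjacency list for $\Gamma$. Because $\mathcal{H}(3,9)$ is a generalized quadrangle, the resulting graph is strongly regular of known parameters, which is exactly the kind of structural information that lets the clique search prune aggressively.

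The second step is to feed $\Gamma$ into the modified Bron-Kerbosch algorithm developed later in the paper. That procedure lists every maximal clique of $\Gamma$ exactly once; binning the output by cardinality then yields, for each integer $m$, the number of maximal skew sets of lines on $X$ of size $m$, which is precisely the data the proposition records as known.

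The main obstacle is computational feasibility rather than conceptual difficulty. Even though $\Gamma$ has only $112$ vertices, the number of maximal cliques is expected to be very large, so an unmodified Bron-Kerbosch recursion would not terminate in reasonable time. The modifications developed later in the paper---pivoting, pruning, and exploitation of the action of the projective unitary automorphism group of $X$ on $\Gamma$---are what make the enumeration tractable, and their correctness is what guarantees the tabulation is exhaustive. Once the algorithm has been proved correct, the proposition reduces to reading off the resulting machine output.
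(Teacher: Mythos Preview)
Your overall framework---pass to the skew graph $\Gamma(X)$ and enumerate maximal cliques---matches the paper exactly. The gap is in which algorithm you invoke and what you claim it outputs. The \emph{modified} Bron-Kerbosch algorithm with orbits does \emph{not} list every maximal clique of $\Gamma$ exactly once; by design it returns only (at least one, possibly several) representative(s) of each orbit under the group action. To recover exact counts from that output you would still need to identify duplicate representatives and compute the size of each orbit via the orbit-stabilizer theorem, a nontrivial additional step you do not mention. Without it, the modified algorithm alone cannot produce the table the proposition asserts.

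In fact the paper does not use the modified algorithm here at all. For $q=3$ the graph $\Gamma(X)$ has only $112$ vertices and roughly $5.2$ million maximal cliques, and the paper reports that the \emph{ordinary} Bron-Kerbosch algorithm with pivoting already runs to completion and yields the full list; the orbit-based modification is introduced only because the $q=4$ case is out of reach otherwise. So your assertion that ``an unmodified Bron-Kerbosch recursion would not terminate in reasonable time'' is contrary to what the paper actually observes, and the correct proof is simply: run standard BK with pivoting on $\Gamma(X)$ and bin the output by size.
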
  
\noindent We also give the count in the previously known case of $q=2$. 
See Section \ref{sec:computational-results} for the exact counts as well as partial results for $q=4$.
Most excitingly in this work, working on these results led us to a novel question in graph theory and computer science, and an algorithm to solve it.

This connection arose by first converting our original question about lines on surfaces to a question in graph theory.
In particular, we note that counting maximal skew sets of lines on $X$ is equivalent to counting maximal complete subgraphs (cliques) of a specific graph (the complement of the incidence graph of lines on $X$), so that our problem is equivalent to a special case of the \textit{clique listing problem}.
The clique listing problem asks for an algorithm to list all cliques of any given graph.
Solving the clique listing problem for all graphs is difficult as even deciding if there is a clique of a fixed size $k$ is an NP-complete problem \cite{Karp}. In this work, we propose a modified version of this problem that may be easier to approach. 
Intuitively, graphs with the most cliques will have many automorphisms, and many of the cliques will be in the same orbit under the action of the automorphism group. Motivated by this intuition, a different way to package the information of all cliques in a graph is a set of generators for the automorphism group and a single representative of each orbit; it would also suffice to use any subgroup of the automorphism group.
Computationally, this approach will be most useful when the automorphism group of the graph (or at least a subgroup of it) is known, which motivates the following question.
\begin{problem*}[Clique Orbit Listing Problem]
Given a graph $G$ and a group
acting on it, list (at least) one representative of each orbit of maximal cliques in $G$.
\end{problem*}
We give a modified version of the Bron-Kerbosch algorithm with pivoting which solves this problem, see Algorithm~\ref{alg:bk-modtest}.
Our algorithm has the disadvantages of often giving more than one representative of each orbit and depending on the order in which the vertices are listed, but has the big advantage of being very fast in practice.
In particular, the original Bron-Kerbosch algorithm with pivoting would have taken prohibitively long to run on our computer resources for the graph corresponding to the generalized quadrangle $\mathcal{H}(3,4^2)$, but our algorithm was able to run relatively quickly.  
In future work, both the theoretical and practical advantages of this algorithm, and other algorithms solving the clique orbit listing problem, should be explored. 

Finally, inspired by our computational results, we give an explicit construction of a skew set of size $\frac{3}{2}q^2-\frac{1}{2}q+1$ on any Hermitian surface of degree $q+1$.  While skew sets of this size were previously known in other language \cite{ACE2, Coolsaet}, we believe that our construction can shed new light on their geometry.  
Furthermore, we find a new lower bound on the number of maximal skew sets of at least this size, thereby showing that there are at least 
$\binom{q+1}{3}2^{(q^2 -q + 2)/2}$ 
maximal cliques in $\mathcal{H}(3,q^2)$.

The paper is organized as follows: Section~\ref{sec:background} provides the necessary background on Hermitian varieties and generalized quadrangles, Section~\ref{sec:clique-conversion} discusses the conversion of the geometric problem of finding maximal sets of skew lines to finding cliques in graphs and algorithms to solve the clique listing problems, Section~\ref{sec:computational-results} gives our computational results on Hermitian generalized quadrangles, and Section~\ref{sec:large-example} gives a geometric construction of a large set of skew lines found during our computations.

\subsection{Acknowledgements}%This project started as a Lab of Geometry (LoG(M)) project at the University of Michigan with Haoyu Du and Madhav Krishna where Tim Ryan and Janet Page acted as mentors and Anna Brosowsky and Sandra Nair acted as graduate mentors.  
Anna Brosowsky was partially supported by NSF grants 1952399, 1840234, and 2101075.  
Tim Ryan was partially supported by Karen Smith's Keeler and Fulton Professorships.  
We would also like to thank Karen Smith for many useful conversations.

\section{Background}
\label{sec:background}
\subsection{Smooth extremal surfaces}
Let $p$ be a prime and let $k$ be a field extension of $\mathbb{F}_{q^2}$ where $q = p^e$.  
We are studying lines on the \emph{Fermat surface of degree $q+1$}, which is the projective variety 
\[X:= \mathbb{V}(x^{q+1} + y^{q+1} + z^{q+1} + w^{q+1}) \subseteq \mathbb{P}_{k}^3.\]
Fix $X$ as this surface throughout the paper.
Over any such $k$, this $X$ is an example of a \emph{smooth extremal surface}.  
When $k$ is algebraically closed, $X$ is projectively equivalent to $\mathbb{V}(\sum_{i,j=1}^4 a_{ij}x_i^qx_j)$ for any choice of $a_{ij}$ such that the matrix with entries $a_{ij}$ has full rank, which in particular means that $X$ is projectively equivalent to any smooth extremal surface of this degree \cite{beauville,extremal}.

We recall several facts about $X$.  In the case that $k = \mathbb{F}_{q^2}$, these are well known facts about Hermitian varieties, see for example \cite{BC,Hirschfeld}.  The case that $k$ is any other field containing $\mathbb{F}_{q^2}$ follows from \cite{geometry,Katsura}.

\begin{theorem}\label{factsaboutX}
Let $X=\mathbb{V}(x^{q+1} + y^{q+1} + z^{q+1} + w^{q+1})$.  Then
\begin{enumerate}
    \item $X$ contains $q^4 + q^3 + q + 1$ lines.
    \item At every point where two lines on $X$ intersect, $q+1$ lines on $X$ intersect.  We call these points \textbf{star points}\footnote{When $k = \mathbb{F}_{q^2}$, all points on $X$ are star points.}.
    \item $X$ contains $q^5 + q^3 + q^2 +1$ star points.
    \item Each line on $X$ contains $q^2 + 1$ star points.
    \item The lines on $X$ do not form any triangles\footnote{In other words, any three lines in the same plane on $X$ all intersect at the same point.}.
\end{enumerate}
\end{theorem}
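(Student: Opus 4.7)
The plan is to reduce all five statements to the case $k = \mathbb{F}_{q^2}$, in which $X$ is a Hermitian surface and every assertion is classical (see \cite{BC, Hirschfeld}). The reduction itself is essentially the content of \cite{geometry, Katsura}, which I would invoke at the outset.

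First I would establish the key reduction: every line of $X$ is already defined over $\mathbb{F}_{q^2}$, regardless of the ambient field $k$. Granting this, the intersection of any two lines on $X$ is automatically $\mathbb{F}_{q^2}$-rational, so the sets of lines and of star points are unchanged under base change. A direct verification proceeds by parametrizing a line as $[a_0 s + b_0 t : \cdots : a_3 s + b_3 t]$, substituting into the Fermat equation, and using $(u+v)^{q+1} = u^{q+1} + u v^q + u^q v + v^{q+1}$ in characteristic $p$ to extract four polynomial conditions on the coefficient pairs $(a_i, b_i)$. These conditions cut out an affine variety defined over $\mathbb{F}_{q^2}$, and comparing with the known line count on the Hermitian surface shows that all solutions are already $\mathbb{F}_{q^2}$-rational.

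Next I would verify each claim for the Hermitian surface $X/\mathbb{F}_{q^2}$. Claim (1) is the classical count of generators, $(q+1)(q^3+1) = q^4+q^3+q+1$. For (2) and (4), I would use the standard fact that the $\mathbb{F}_{q^2}$-rational points of $X$ together with the lines on $X$ form the generalized quadrangle $\mathcal{H}(3,q^2)$ of order $(q^2, q)$: every point lies on exactly $q+1$ lines, and every line contains exactly $q^2+1$ points. For (3), observe that the star points are exactly the $\mathbb{F}_{q^2}$-rational points: every such point has $q+1 \geq 2$ lines through it, while any intersection of two $\mathbb{F}_{q^2}$-lines is itself $\mathbb{F}_{q^2}$-rational. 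Hence the star point count equals $|X(\mathbb{F}_{q^2})| = (q^2+1)(q^3+1) = q^5+q^3+q^2+1$. A double count of incidences, $(q+1)(q^5+q^3+q^2+1) = (q^2+1)(q^4+q^3+q+1)$, serves as a sanity check. Finally, (5) is the defining ``no triangles'' axiom of a generalized quadrangle.

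The main obstacle is the rigidity step controlling what happens when passing from $\mathbb{F}_{q^2}$ to a larger $k$. Explicitly confirming that the coefficient equations cut out only $\mathbb{F}_{q^2}$-rational solutions requires a careful combination of Frobenius identities with an a priori bound on the number of lines, which is most cleanly handled by the results cited from \cite{geometry, Katsura}. Once this rigidity is in place, the remaining deductions are standard facts about the Hermitian generalized quadrangle.
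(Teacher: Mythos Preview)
Your proposal is correct and matches the paper's own treatment: the paper does not give a proof of this theorem but simply states that the $k=\mathbb{F}_{q^2}$ case is classical (citing \cite{BC,Hirschfeld}) and that the extension to arbitrary $k\supseteq\mathbb{F}_{q^2}$ follows from \cite{geometry,Katsura}. Your sketch fleshes out precisely that strategy, so there is nothing to add.
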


Further, the lines and star points on $X$ (which is all of $X$ in the case that $k = \mathbb{F}_{q^2}$) form an incidence structure called a \emph{generalized quadrangle}.

\begin{definition}
A \emph{generalized quadrangle} $GQ(s,t)$ is a collection of lines and points such that
\begin{enumerate}
    \item On every line there are exactly $s + 1$ points. There is at most one point on two distinct lines.
    \item Through every  point there are exactly $t + 1$ lines. There is at most one line through two distinct points.
    \item For every point $p_1$ not on a line $L$, there is a unique line $M$ and a unique point $p_2$, such that $p_1$ is on $M$, and $p_2$ on $M$ and $L$.  (This implies $GQ(s,t)$ does not contain triangles.)
\end{enumerate}
\end{definition}

The following proposition is well known in the case that $X$ is Hermitian (i.e., when $k = \mathbb{F}_{q^2}$) \cite{Hirschfeld,BC}, but holds for any field $k$ containing $\mathbb{F}_{q^2}$.

\begin{proposition}
The lines and star points on $X$ form a generalized quadrangle with $s= q^2$ and $t = q$, often called the \emph{Hermitian generalized quadrangle}. 
\end{proposition}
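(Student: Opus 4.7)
The plan is to verify each axiom of $GQ(s,t)$ with $s=q^2$ and $t=q$, drawing on the bullets of Theorem~\ref{factsaboutX}. Axiom (1) is immediate: item~(4) of Theorem~\ref{factsaboutX} gives $q^2+1=s+1$ star points on each line, and two distinct lines in $\mathbb{P}^3$ meet in at most one point. Similarly, axiom~(2) follows from item~(2) of Theorem~\ref{factsaboutX}, which gives $q+1=t+1$ lines through each star point, together with the fact that two distinct points of $\mathbb{P}^3$ determine a unique line.

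The real content is axiom~(3). Fix a line $L\subseteq X$ and a star point $p_1\notin L$. I would first prove uniqueness by a triangle argument: if two distinct lines $M_1,M_2$ through $p_1$ both met $L$, at points $p_2,p_3$ respectively, then $p_2\neq p_3$ (otherwise $M_1$ and $M_2$ would coincide with the unique $\mathbb{P}^3$-line through $p_1$ and $p_2$), so $L, M_1, M_2$ would form a triangle, contradicting item~(5) of Theorem~\ref{factsaboutX}.

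For existence, I would use a double-counting argument. Using items~(4) and~(2), the number of lines on $X$ other than $L$ that meet $L$ is $(q^2+1)\cdot q = q^3+q$, since each star point of $L$ lies on $q$ further lines of $X$, and each such line meets $L$ in exactly one point. By item~(3), the number of star points not on $L$ is $(q^5+q^3+q^2+1)-(q^2+1)=q^5+q^3$. Now count pairs $(p_1,M)$ with $p_1$ a star point off $L$ and $M$ a line of $X$ through $p_1$ meeting $L$: each of the $q^3+q$ lines meeting $L$ contributes $q^2$ such star points (its $q^2+1$ star points minus the one on $L$), giving $(q^3+q)q^2 = q^5+q^3$. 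By the uniqueness just established, the number of such pairs equals $\sum_{p_1\notin L}N(p_1)$ where $N(p_1)\in\{0,1\}$, so we get $\sum N(p_1)=q^5+q^3$, which forces $N(p_1)=1$ for every star point $p_1\notin L$. This produces the required unique line $M$ through $p_1$ meeting $L$ at a unique star point $p_2$.

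The only potentially subtle step is verifying the uniqueness of the meeting point $p_2$ on $M\cap L$; but any two lines in $\mathbb{P}^3$ meet in at most one point, so this is automatic once $M$ is found. The main conceptual hurdle is simply realizing that the no-triangle condition of item~(5) is precisely what makes the double count collapse to force $N(p_1)=1$; everything else is bookkeeping with the combinatorial data already recorded in Theorem~\ref{factsaboutX}.
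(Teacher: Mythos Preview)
Your proof is correct, but your route to the existence part of axiom~(3) is genuinely different from the paper's. The paper argues geometrically: at a star point $p_1$, the tangent plane $T_{p_1}X$ meets $X$ in the $q+1$ lines through $p_1$; since any line in $\mathbb{P}^3$ meets any plane, $L$ must hit $T_{p_1}X$, and the intersection point lies on one of those $q+1$ lines and is itself a star point (the last step invoking an external result from \cite{extremal}). You instead run a purely combinatorial double count using only the numerical data already recorded in Theorem~\ref{factsaboutX}. Your approach has the advantage of being entirely self-contained within the statement of Theorem~\ref{factsaboutX}, requiring no further geometric input about tangent planes or an outside citation; the paper's argument, on the other hand, actually exhibits the line $M$ and explains \emph{why} it exists, rather than forcing its existence by a counting coincidence. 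Both are clean, and for axioms~(1), (2), and the uniqueness half of~(3) you and the paper are essentially identical.
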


\begin{proof}
To see this, note that properties (1) and (2) follow from properties (4) and (2) of Theorem~\ref{factsaboutX}.  The uniqueness of $M$ in property (3) is property (5) in Theorem~\ref{factsaboutX}.  Finally, to see the existence of $M$ in property (3), note that if $p_1$ is a star point on $X$ then $T_{p_1}X$, the tangent plane to $X$ at $p_1$, intersects $X$ in a collection of $q+1$ lines.  In $\mathbb{P}_{k}^3$, every line intersects every plane, so that $L \subseteq X$ must intersect $T_{p_1}X$, necessarily on one of the $q+1$ lines of $T_{p_1}X \cap X$.  This intersection point is a star point since it is at the intersection of two lines by Proposition~8.12 in \cite{extremal} (and the fact that $X$ is smooth).
\end{proof}

In some literature, this generalized quadrangle  is denoted by $\mathcal{H}(3,q^2)$ and called a \emph{Hermitian generalized quadrangle}.  Here, $3$ refers to the fact that $X \subseteq \mathbb{P}^3$.   
This Hermitian generalized quadrangle has long been an object of study in finite geometry and combinatorics.
It also has applications to coding theory, see \cite{EtzionStorme} for an introduction to this connection.
For a more thorough introduction to the study of Hermitian quadrangles, see \cite{Segre,PayneThas,Thas}. 
In the language of this field, a smooth extremal variety is ``the set of all absolute points of a non-degenerate unitary polarity'', see for example \cite{CossidentePavese}; they are examples of the more general notion of a ``polar space'' \cite{Hirschfeld}.
A line on a smooth extremal surface is a ``generator''.
A (maximal) set of $t$ skew lines is a ``(complete) $t$-span'' or a ``(maximal) partial spread''.
A ``spread'' is a maximal partial spread containing all of the star points of the extremal surface.
It is known that no spread exists in $\mathcal{H}(3,q^2)$ \cite{Segre}, and so the attention is naturally turned to complete partial spreads, i.e., maximal sets of skew lines.
In particular, what is the smallest and largest size of maximal partial spreads \cite{EbertHirschfeld}?
In the language of algebraic geometry, what is the smallest and largest size of a maximal set of skew lines on a smooth extremal surface?

Throughout this paper, we focus on finding maximal skew sets of lines on $\mathcal{H}(3,q^2)$,
or in other words on $X = \mathbb{V}\left(x^{q+1}+y^{q+1}+z^{q+1}+w^{q+1}\right)$.  In order to do computations, we enumerate the lines on $X$ explicitly.  Throughout this paper, we will use the following notation:
\begin{notation}\label{not:indices}
Let $\mu$ be a primitive element of $\mathbb{F}_{q^2}$, and let $\nu = \mu^{\frac{(q-1)\mathrm{gcd}(2,q)}{2}}$. This will ensure that $\mu^{q^2 - 1} = 1$ and that $\nu^{q+1} = -1$.
Let $\mu^{a_{1}}$, $\dots$, $\mu^{a_{(q-2)(q+1)}}$ be the powers of $\mu$ that are not powers of $\nu$ (listed so that $0 \leq a_i < a_{i+1} \leq q^2-1$ for each $i$), and let $\mu^{a_{i,1}}$, $\dots$, $\mu^{a_{i,q+1}}$ be the $(q+1)^{st}$ roots of $-1-\mu^{(q+1)a_i}$  (again, listed so that $0 \leq a_{i,j} < a_{i,j+1} \leq q^2-1$ for each $i,j$).
We enumerate the lines of $X$ in the following way.
\begin{itemize}
    \item $L_{i(q+1) + j} = \mathbb{V}(x + \nu^{2i+1}y,\ z + \nu^{2j+1}w)$ for $0 \leq i, j \leq q$
    \item $L_{(q+1)^2 + i(q+1) + j} = \mathbb{V}(x + \nu^{2i+1}z,\ y + \nu^{2j+1}w)$ for $0 \leq i, j \leq q$
    \item $L_{2(q+1)^2+ i(q+1) + j} = \mathbb{V}(x + \nu^{2i+1}w,\ y + \nu^{2j+1}z)$ for $0 \leq i, j \leq q$
    \item $L_{(3+i)(q+1)^2 + j(q+1) + k} = \mathbb{V}(-x + \mu^{a_i}y + \mu^{a_{ij}}w,\ -\mu^{qa_i}x -y + \mu^{a_{ik}}z)$ for $0 \leq i\leq q^2-q-2$ and $0 \leq j, k\leq q$
\end{itemize}
\end{notation}

\section{Conversion to the clique problem and a modification of the BK-algorithm}
\label{sec:clique-conversion}

We first convert the problem of finding maximal skew sets of lines to another well known problem.

\begin{definition}
Let $q$ be a power of a prime $p$, and let $X = \mathbb{V}(x^{q+1} + y^{q+1} + z^{q+1} + w^{q+1})$.  Denote by $G(X)$ the graph whose vertices correspond to lines on $X$, and whose vertices are connected by an edge if and only if the corresponding lines are skew.
\end{definition}

Finding maximal skew sets of lines on $X$ is equivalent to finding maximal complete subgraphs (cliques) on $G(X)$.  Thus, we are interested in solving the following problem for the particular graph $G = G(X)$.

\begin{problem}[Clique (Listing) Problem]
Given a graph $G$, list all the maximal cliques in $G$.
\end{problem}

\begin{remark}
We note that equivalently, we could have considered the \emph{incidence graph} of the lines in $X$, i.e., the graph where each vertex represents a line on X, and an edge connects two vertices if and only if the corresponding lines intersect. This incidence graph is the complement of $G(X)$ and finding a maximal skew set of lines on $X$ is equivalent to finding a maximal ``independent set," or set of vertices with no edges between them, on this incidence graph.  In general, the clique problem for a graph $G$ is equivalent to listing all maximal independent sets on the complement of $G$.  
\end{remark}

It is known that a simpler variant of the clique listing problem is NP-complete \cite{Karp}.  
One of the most efficient algorithms to solve the clique problem is the Bron-Kerbosch algorithm with pivoting.
In particular, it has the optimal worst-case running time (without printing), $O(3^{\frac{n}{3}})$, since any $n$-vertex graph has at most $3^{\frac{n}{3}}$ maximal cliques \cite{MR182577,TOMITA200628}.

In Algorithm~\ref{alg:bk-pivot}, we give the pseudo-code for the Bron-Kerbosch algorithm with pivots (or BK-algorithm), where $N(v)$ denotes the neighbors of (i.e., set of vertices sharing an edge with) the vertex $v$ \cite{BK}.

%\smallskip

\begin{figure}[H]
\begin{algorithm}[H]
  \caption{Bron-Kerbosch algorithm with pivoting
    \label{alg:bk-pivot}}
  \begin{algorithmic}[1]
    \Statex{{\bf inputs:} }
            \Statex{ $R$: a set of vertices to Require} 
            \Statex{ $P$: a set of Potential vertices to consider }
            \Statex{$E$: a set of vertices to Exclude}
    \Statex
    \Function{BronKerbosch}{$R,P,E$}
     \If{$P$ and $E$ are both empty}
      \State report $R$ as maximal clique
      \EndIf
     \State choose a pivot vertex $u$ in $P \cup E$
     \ForEach{vertex $v$ in $P \setminus N(u)$}
        \State BronKerbosch($R \cup \{v\}, P \cap N(v), E \cap N(v)$)
        \State $P \coloneqq P \setminus \{v\}$
        \State $E \coloneqq E \cup \{v\}$
     \EndFor
    \EndFunction
  \end{algorithmic}
\end{algorithm}
\end{figure}

In brief, the original Bron-Kerbosch algorithm systematically extends a starting clique (normally the empty clique) in every possible way. 
The addition of pivoting is a pruning technique which significantly speeds up the algorithm. 
At each step, pivoting chooses a vertex $u$ which is not already in the clique.  Note that either the vertex $u$ will be in a maximal clique or a vertex disconnected from $u$ will be in a maximal clique (otherwise, if only neighbors of $u$ were in a clique, we could add $u$ and make it larger).  
Thus, it suffices to extend the clique by considering vertices that are not neighbors of $u$ (including $u$ itself).

With the algorithm above on the graph $G(X)$, we can enumerate the maximal skew sets of lines on $X$ when $q=2$ and $q=3$.  However, it becomes unfeasible for higher degrees.  We can push this computation further by using the automorphisms of our surface $X$.

\subsection{The BK-algorithm with orbits}

In order to improve the BK-algorithm, we can use the fact that automorphisms of our surface $X$ preserve sets of skew lines. 
Rephrased in terms of our graph $G(X)$, we note that any automorphism of $G(X)$ will take any clique to another clique of the same size.  Instead of listing every clique in $G(X)$, we could list one representative from each orbit of cliques in $G(X)$, under the action of any group $\mathcal{G}$ on $G(X)$.

\begin{problem}[Clique Orbit Listing Problem]
Given a graph $G$ and a group acting on it, list (at least) one representative of each orbit of maximal cliques in $G$.
\end{problem}

\begin{example}
With notation as in Notation~\ref{not:indices}, we have that the sets $S_1 := \{L_0, L_4, L_8, L_{10}, L_{12}\}$ and $S_2 := \{L_0, L_4, L_8, L_{11}, L_{15}\}$ are both skew sets of lines of size five on $X$ when $q = 2$.  Then the automorphism $\varphi$ induced by the map $k[x,y,z,w] \to k[x,y,z,w]$ sending 
\[
x \mapsto \nu^2z,\  y \mapsto \nu^2w,\  z \mapsto x, \mbox{ and }w \mapsto y
\]
will take $S_1$ to $S_2$. It fixes $L_0$ , $L_4$, and $L_8$, while it sends $L_{10} \mapsto L_{11}$ and $L_{12} \mapsto L_{15}$. So these two sets are in the same orbit of skew sets under the action of the automorphism group.
\end{example}

Note that solving this problem also solves the Clique Problem.  
The entire list of maximal cliques can be recovered from a list of orbit representatives by acting by the group. 
In particular, listing the orbits and the (generators of the) group can be a more efficient way to list the cliques.
\begin{example}
\label{ex: worst case}

In \cite{MR182577}, it is proved that (up to isomorphism) there is a unique graph which has the maximal number of cliques on $n=3k$ vertices, which is $3^{\frac{n}{3}}$ cliques.
This graph has $n=3k$ vertices, labelled $p_{3i+j}$ where $0\leq i \leq k-1$ and $0\leq j \leq 2$. There is an edge between $p_{3i+j}$ and $p_{3k+m}$ if and only if $i\neq k$.
The automorphism group of this graph has order $k!\cdot 6^k$ (as you can permute each set of three vertices $\{p_{3i},p_{3i+1},p_{3i+2}\}$ and the three vertices within each group).
In fact, the graph has a single maximal clique up to automorphism, so the clique orbit listing problem is far simpler in this case as you can list a single orbit representative and the $4$ generators of the automorphism group. 
Similar results hold for unique graphs with the maximal number of cliques when $n=3k+1$ and $n=3k+2$.
\end{example}
Intuitively, the existence of a high number of cliques in a graph with a fixed number of vertices will force high levels of symmetry in the graph, i.e., the graph would have a lot of automorphisms. The graphs which take a longer time for the clique listing problem will be those with a higher number of nontrivial orbits of cliques. These are precisely the graphs for which the clique orbit listing problem should be the biggest improvement on the clique listing problem.

Below, we give an algorithm which provides at least one representative for each orbit of cliques in a graph.
Note, this modified Bron-Kerbosch algorithm does not list a unique representative for each orbit, and in the case that a graph has no automorphisms, the Bron-Kerbosch algorithm with orbits reduces exactly to the traditional Bron-Kerbosch algorithm.
We give our modified Bron-Kerbosch algorithm with orbits in pseudo-code below, see Algorithm~\ref{alg:bk-modtest}.

\begin{figure}[htb] 
\begin{algorithm}[H]
  \caption{Modified Bron-Kerbosch algorithm with orbits \label{alg:bk-modtest}}  
  \begin{algorithmic}[1]
    \Statex{{\bf inputs:}}
            \Statex{ $R$: a set of vertices to Require} 
            \Statex{ $P$: a set of Potential vertices to consider}
            \Statex{$E$: a set of vertices to Exclude}
            \Statex{StabR: the stabilizer of $R$ as an ordered list}
            
    \Statex
    \Function{BronKerbosch}{$R,P,E$, StabR}
        \If{$P$ and E are both empty}
            \State{report R as maximal clique}
        \EndIf
        \State{Choose a pivot vertex $u$ in $P \cup E$}
        \State{PCopy $\coloneqq P$}
        \State{OrbitReps $\coloneqq ()$}
        \While{PCopy$\neq ()$}
        \State{Append the first element $L$ of PCopy to OrbitReps.}
        \For{$g \in$ StabR}
        \State{Remove $g(L)$ from PCopy}
        \EndFor
        \EndWhile
     \ForEach{vertex $v$ in $\mathrm{OrbitReps}$}
      \State{StabRv $:=\{\phi \in$ StabR $\mid \phi(v) = v\}$}
      \State BronKerbosch($R \cup \{v\}, P \cap N(v), E \cap N(v)$, StabRv)
      \State $P \coloneqq P \setminus \{v\}$
      \State $E \coloneqq E \cup \{v\}$
      \State StabR $\coloneqq$ StabRv
     \EndFor
    \EndFunction
  \end{algorithmic}
\end{algorithm}
\end{figure}

\subsection{Applying the algorithm}
To effectively implement this algorithm, we first use the following theorem to reduce the starting case.
\begin{theorem} [\cite{geometry}, Theorem 5.1.2]
The automorphism group of $X$, denoted $\Aut(X)$, acts transitively on sets of three skew lines.  In particular, given any two skew sets of three lines $\{\ell_1, \ell_2, \ell_3\}$ and $\{m_1,m_2,m_3\}$ there is an automorphism $\phi$ taking the ordered set $\{\ell_1, \ell_2, \ell_3\}$ to the ordered set $\{m_1,m_2,m_3\}$.
\end{theorem}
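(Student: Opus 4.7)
The plan is to establish the theorem by reducing it to a purely group-theoretic statement about the subgroup $PGU(4,q^2) \subseteq \Aut(X)$ and then verifying that statement with Witt's theorem together with a stabilizer computation. Since all lines on $X$ and their incidence structure are defined over $\mathbb{F}_{q^2}$, it suffices to show that $PGU(4,q^2)$ acts transitively on ordered triples of skew lines; enlarging the base field or allowing Frobenius only enlarges the acting group. Under the standard dictionary, lines on $X$ correspond bijectively to totally isotropic $2$-planes in $V = \mathbb{F}_{q^2}^4$ equipped with the Hermitian form $h(u,v) = \sum u_i v_i^q$, and a pair of lines is skew exactly when the corresponding $2$-planes intersect trivially, which by dimension is the same as a direct sum decomposition $V = L_1 \oplus L_2$.

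First I would prove transitivity on ordered \emph{pairs} of skew lines by the standard hyperbolic-basis argument. Given such a pair $(L_1, L_2)$, the form $h$ restricts to a nondegenerate sesquilinear pairing between $L_1$ and $L_2$, so one can choose a basis $e_1, e_2$ of $L_1$ together with a dual basis $f_1, f_2$ of $L_2$ satisfying $h(e_i, f_j) = \delta_{ij}$ and $h(e_i,e_j) = h(f_i,f_j) = 0$. Any two such hyperbolic bases of $V$ are related by a unique element of $U(4,q^2)$, giving transitivity on ordered pairs.

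The heart of the argument is then to show that the pointwise stabilizer $\Stab(L_1, L_2)$ acts transitively on the set of totally isotropic $2$-planes $L_3$ meeting both $L_1$ and $L_2$ trivially. Fixing the hyperbolic basis above, this stabilizer is isomorphic to $GL(L_1) \cong GL_2(\mathbb{F}_{q^2})$, acting by $g$ on $L_1$ and by the inverse Hermitian adjoint $(g^*)^{-1}$ on $L_2$ in order to preserve $h$. A candidate third plane is the graph $\{v + Mv : v \in L_1\}$ of an invertible linear map $M \colon L_1 \to L_2$, and a short polarization argument shows that total isotropy of the graph is equivalent to the sesquilinear form $H_M(v,w) := h(v, Mw)$ being (skew-)Hermitian on $L_1$. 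The $GL_2$-action becomes the standard congruence action $H_M \mapsto g^* H_M g$, and since every nondegenerate Hermitian form on $\mathbb{F}_{q^2}^2$ is equivalent to the standard one, this action is transitive on the admissible $M$. Combining transitivity on ordered pairs with transitivity of $\Stab(L_1,L_2)$ on the third plane yields the theorem.

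The main obstacle I anticipate is the careful bookkeeping in the stabilizer step: verifying the precise isotropy condition on $M$, confirming that the $GL_2$-action matches the congruence action on Hermitian forms with the correct Frobenius twist, and handling the characteristic $2$ case where the distinction between Hermitian and skew-Hermitian collapses and one must use the appropriate quadratic-form refinement. A secondary issue is extending from $k = \mathbb{F}_{q^2}$ to an arbitrary extension field $k$, which is resolved by observing that the lines, their intersection pattern, and the $PGU(4,q^2)$ action are all already defined over $\mathbb{F}_{q^2}$, so transitivity over the smaller field implies transitivity in $\Aut(X)$ in general.
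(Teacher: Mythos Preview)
The paper does not prove this statement; it is quoted from an external reference (Theorem~5.1.2 of \cite{geometry}) and used as a black box to seed the algorithm, so there is no in-paper argument to compare against.

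Your approach is the standard one and is correct in outline. Two small clarifications on the obstacles you flag. First, the isotropy computation: the graph of $M$ is totally isotropic iff $h(v,Mw)+h(Mv,w)=0$ for all $v,w\in L_1$, which says $H_M(v,w)=-H_M(w,v)^q$, i.e.\ $H_M$ is skew-Hermitian. Over $\mathbb{F}_{q^2}$ one passes between Hermitian and skew-Hermitian by scaling by any $\lambda$ with $\lambda^{q-1}=-1$, and such $\lambda$ always exists; the classification of nondegenerate Hermitian forms on $\mathbb{F}_{q^2}^2$ (a single equivalence class, by surjectivity of the norm $\mathbb{F}_{q^2}^*\to\mathbb{F}_q^*$) then gives the transitivity you need. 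Second, characteristic~$2$: no quadratic-form refinement is needed. The subtlety you have in mind is for \emph{orthogonal} forms in characteristic~$2$; Hermitian forms over $\mathbb{F}_{q^2}/\mathbb{F}_q$ behave uniformly in all characteristics because the field involution $x\mapsto x^q$ is nontrivial, and when $q$ is even ``skew-Hermitian'' coincides with ``Hermitian'' so the argument is simpler, not harder. Your base-change remark is also fine: the lines on $X$ and the $PGU(4,q^2)$-action are all defined over $\mathbb{F}_{q^2}$, which is exactly what the paper invokes from \cite{geometry,Katsura} in setting up Theorem~\ref{factsaboutX}.
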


Thus, we may assume without loss of generality that our skew set contains a fixed (ordered) set of three lines.
As per Notation~\ref{not:indices} we use $\left(L_0,L_{q+2},L_{2q+4}\right)$ as our initial list of three skew lines, and we search only for skew sets containing $\left(L_0,L_{q+2},L_{2q+4}\right)$.
Because of this, we will consider the action by $\mathcal{G} \subseteq \Aut(X)$ where $\mathcal{G}$ is the subgroup of $\Aut(X)$ which sends the ordered list $\left(L_0,L_{q+2},L_{2q+4}\right)$ to itself, typically called the \emph{(ordered) stabilizer} of $\left(L_0,L_{q+2}, L_{2q+4}\right)$.
In particular, we start with $R = \left(L_0,L_{q+2},L_{2q+4}\right)$, $P = \{\text{common neighbors of }\left(L_0,L_{q+2},L_{2q+4}\right)$ in $G(X) \}$, $E = \varnothing$, and $\mathrm{StabR} =\text{ the stabilizer of } \left(L_0,L_{q+2},L_{2q+4}\right)$.

\section{Computational Results}
\label{sec:computational-results}

We were able to enumerate all maximal skew sets of lines on $X$ for $q = 2,3$ using the usual BK-algorithm, and were able to find at least one representative of each orbit for $q = 4$ using our modified BK-algorithm with orbits.

\subsection{Maximal sets of skew lines on a smooth extremal cubic surface}
As the most classically studied case, everything was previously known when $q=2$. 
We include this case for the sake of completeness. 
There are only maximal skew sets of size 6, of which there are 72, and size 5, of which there are 216.
The existence of the 72 maximal skew sets of size 6 on any smooth cubic surface over any algebraically closed field is a classical fact whose study starts with Schl\"afli's construction of double sixes \cite{Schlafli}.
The existence of 216 maximal skew sets of size was also known \cite{EbertHirschfeld}.

\bigskip
\begin{center}
\begin{tabular}{|c|c|c|}
    \hline
    \multicolumn{3}{|c|}{$q=2$}\\
    \hline
     $n$ & 5 & 6 \\ \hline
     number of skew sets of size $n$ & 216 & 72 \\ \hline
\end{tabular}
\end{center}

\subsection{Maximal sets of skew lines on a smooth extremal quartic surface}
\label{subsec:quartic}

In the case of $q=3$, Ebert and Hirschfeld \cite{EbertHirschfeld} showed that $16$ was the largest possible size of any maximal skew set and provided a skew set of size 16. 
They further showed that the orbit of this skew set under the action of the automorphism group of the surface has size 2268, but were unable to show whether or not there were any other orbits of skew sets of size 16. 
Our count proves their orbit is the only orbit of size 16.  In the same paper, they prove that any maximal skew set must have size at least $2q+1$ for any $q$ (and at least $2q+2$ for $q>3$), but they did not provide a maximal skew set of size $2\cdot 3+1 = 7$ in the case $q=3$.  We show the following:

\begin{proposition}\label{quarticskewsets}
The sizes of maximal skew sets on a smooth extremal quartic surface are $7$, $10$, $11$, $12$, $13$, and $16$.
Further, there exists a unique orbit of maximal skew sets of size $16$.
\end{proposition}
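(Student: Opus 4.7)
The plan is to verify Proposition~\ref{quarticskewsets} computationally by running the Bron-Kerbosch algorithm on the graph $G(X)$ for $q = 3$. First I would construct the graph explicitly: by Theorem~\ref{factsaboutX}(1), there are $q^4 + q^3 + q + 1 = 112$ lines on $X$, which I enumerate using Notation~\ref{not:indices}. For each pair $(L_i, L_j)$, I would test whether the two lines are skew (i.e., whether the $4 \times 4$ matrix formed by concatenating their defining linear forms has full rank), producing the edge set of $G(X)$.

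Next I would apply Algorithm~\ref{alg:bk-pivot} to $G(X)$ to enumerate every maximal clique, and tabulate the output by size. To confirm the list of sizes is exactly $\{7, 10, 11, 12, 13, 16\}$, two endpoints need independent sanity checks. For the minimum, Ebert--Hirschfeld's lower bound guarantees no maximal skew set has size less than $2q+1 = 7$, and I would exhibit (from the enumeration) an explicit maximal skew set of size $7$ to show this bound is attained. For the maximum, Ebert--Hirschfeld showed $16$ is an upper bound and produced a maximal skew set of size $16$.

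For the uniqueness claim, I would use Ebert--Hirschfeld's result that the $\Aut(X)$-orbit of their particular size-$16$ skew set has cardinality $2268$. If the enumeration returns exactly $2268$ maximal cliques of size $16$, then every such clique lies in that single orbit, establishing uniqueness. As a cross-check, one can also run the modified algorithm (Algorithm~\ref{alg:bk-modtest}) starting from the ordered triple $\left(L_0, L_{q+2}, L_{2q+4}\right)$ with its ordered stabilizer, and verify that only one size-$16$ orbit representative is produced.

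The main obstacle will be performance and verification rather than mathematical content: the graph has $112$ vertices, so a naive enumeration could be slow, but Bron-Kerbosch with pivoting exploits the considerable symmetry of $G(X)$ and should terminate quickly. More delicate is guarding against the possibility of missing an intermediate size (such as $8$, $9$, $14$, or $15$) due to an implementation bug; the $2268$ count for size $16$, together with an explicit exhibition of maximal skew sets at each of the sizes $7, 10, 11, 12, 13$, provides a robust cross-check against the known geometric invariants.
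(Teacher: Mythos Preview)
Your proposal is correct and matches the paper's own approach essentially verbatim: the paper likewise enumerates all maximal cliques of $G(X)$ for $q=3$ via the classical Bron--Kerbosch algorithm with pivoting, tabulates them by size to obtain exactly $\{7,10,11,12,13,16\}$, and deduces uniqueness of the size-$16$ orbit from the fact that the total count ($2268$) equals the Ebert--Hirschfeld orbit size. Your additional sanity checks (explicit witnesses at each size, the cross-check via Algorithm~\ref{alg:bk-modtest}) go slightly beyond what the paper records but are entirely in the same spirit.
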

This result follows from our computations, done by running the classical
BK-algorithm with pivots on the complement of the incidence graph of lines on the smooth extremal quartic surface.
Further, the algorithm counts the number of maximal skew sets of each size, which is summarized in the table below.

\bigskip

\begin{center}
\begin{tabular}{|c|c|c|c|c|c|c|}
    \hline
    \multicolumn{7}{|c|}{$q=3$}\\
    \hline
     $n$ & 7 & 10 & 11 & 12 & 13 & 16 \\ \hline
     number of skew sets of size $n$ & 5,184 & 766,584 & 3,447,360 & 816,480 & 181,440 & 2,268 \\ \hline
\end{tabular}
\end{center}

\smallskip

That these were the only sizes of maximal skew sets was obtained earlier in \cite{Cimrakova} using different methods. 
The uniqueness of the maximal orbit of size 16 and the exact count of the number of maximal skew sets of each size are however both new results.

\subsection{Maximal sets of skew lines on a smooth extremal quintic surface}

In the case of $q=4$, while the BK-algorithm with pivots was unfeasible to run on our computers, we were able to run our modified BK-algorithm with orbits. 
Similar to the previous case, this result was also first obtained by computer in \cite{Cimrakova}: these are the only possible sizes of maximal skew sets in this graph.

\begin{proposition}
The sizes of maximal skew sets on a smooth extremal quintic surface are $13$, $15$, $16$, $17$, $18$, $19$, $20$, $21$, $22$, $23$, $24$, and $25$.
\end{proposition}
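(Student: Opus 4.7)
The plan is to apply our modified Bron--Kerbosch algorithm with orbits (Algorithm~\ref{alg:bk-modtest}) to the graph $G(X)$ for $X$ the smooth extremal quintic surface. The unreduced BK-algorithm with pivots is infeasible at this scale, so the symmetry reduction afforded by the automorphism action is essential.

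First I would build $G(X)$ explicitly: by Theorem~\ref{factsaboutX} the surface has $q^4 + q^3 + q + 1 = 325$ lines when $q = 4$, which can be enumerated via Notation~\ref{not:indices}. Skewness of two lines on $X$ is a straightforward incidence check in $\mathbb{P}^3$, producing a $325$-vertex graph whose maximal cliques are in bijection with the maximal skew sets of lines on $X$.

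Next, by the transitivity statement recalled above from \cite[Theorem 5.1.2]{geometry}, every skew set of size at least three lies in the $\Aut(X)$-orbit of a skew set containing the fixed ordered triple $(L_0, L_{q+2}, L_{2q+4}) = (L_0, L_6, L_{12})$. It therefore suffices to run the algorithm with $R = (L_0, L_6, L_{12})$, with $P$ equal to the common neighbors of this triple in $G(X)$, with $E = \varnothing$, and with $\mathrm{StabR}$ equal to the ordered stabilizer of $(L_0, L_6, L_{12})$ in $\Aut(X)$. This stabilizer can be computed by intersecting the pointwise stabilizers of the three lines inside $\Aut(X)$, where $\Aut(X)$ is generated by the projective unitary transformations of $\mathbb{P}^3$ together with the Frobenius $[x:y:z:w] \mapsto [x^q:y^q:z^q:w^q]$. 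The initialization does not miss any orbit because the lower bound $2q+2 = 10$ of \cite{EbertHirschfeld} ensures that every maximal skew set has at least three lines.

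By the correctness of Algorithm~\ref{alg:bk-modtest}, the output then contains at least one representative of each orbit of maximal cliques in $G(X)$, so the set of sizes appearing among the outputs is precisely the set of sizes realized by maximal skew sets on $X$. I would then read off this set of sizes and cross-reference with the independent computational result of \cite{Cimrakova}. The main obstacle is the computational cost even after symmetry reduction: at every recursive call one must iterate the relevant stabilizer over the candidate pool $P$ in order to prune to one representative per orbit, and since the stabilizer of only three lines is still large, careful data structures and caching of intermediate orbit data are needed to keep the overall computation within available resources. This engineering is exactly what distinguishes our approach from the direct BK-algorithm with pivots, which is not feasible at this scale.
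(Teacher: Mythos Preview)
Your proposal is correct and follows essentially the same approach as the paper: the paper likewise establishes this proposition by running the modified Bron--Kerbosch algorithm with orbits (Algorithm~\ref{alg:bk-modtest}) on $G(X)$ for $q=4$, initialized at the fixed triple $(L_0,L_{q+2},L_{2q+4})$ with its ordered stabilizer, and then reading off the set of clique sizes from the output. Your write-up simply spells out in more detail the same initialization and correctness justification that the paper invokes.
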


This result follows from running the modified BK-algorithm with orbits for the complement of the incidence graph of lines on the smooth extremal quartic surface. In this case, we provide at least one representative of each orbit of maximal skew sets.  Since our algorithm produces multiple representatives of some orbits, the number of representatives we count (listed below) give an upper bound on the number of orbits of maximal skew sets of each size, and a lower bound on the number of skew sets of each size.
\begin{center}
\begin{tabular}{|c|c|c|c|c|c|c|}
    \hline
    \multicolumn{7}{|c|}{$q=4$}\\
    \hline
     $n$ & 13 & 14 & 15 & 16 & 17 & 18 \\ \hline
     \# representatives of skew sets we produce 
     & 393 & 0 & 8,404 & 250,322 & 11,814,949 & 61,083,983 \\ \hline
\end{tabular}
\end{center}
\begin{center}
\begin{tabular}{|c|c|c|c|c|c|c|}
    \hline
     19 & 20 & 21 & 22 & 23 & 24 & 25 \\ \hline
     119,190,408 & 149,850,860 & 100,366,159 & 31,916,219 & 3,546,647 & 23,507 & 21,082  \\ \hline
\end{tabular}
\end{center}

\section{A large skew set}
\label{sec:large-example}
After studying our computational results in detail, we were also able to construct a large skew set.
A similar construction can be found outlined in other language in \cite{ACE2} where it is attributed to Thas, and it is worked out in detail for odd $q$ in the dual setting in \cite{Coolsaet}.  However, we believe our description of this construction sheds light on the geometry of these large skew sets.  In particular, Thas' skew set is equivalent to the one we give in Theorem~\ref{largeskew}, but in our language it is clear there are many other similar large skew sets, which we mention in Proposition~\ref{countingourconstruction}.  Further, we use our construction to give an exponential lower bound on the number of these skew sets, which is new.  
Additionally, this provides a lower bound on the number of maximal skew sets as we show that no two of these skew sets can be contained in the same maximal skew set.

\begin{theorem}\label{largeskew}
There exist maximal sets of skew lines of size at least $\frac{3}{2}q^2-\frac{1}{2}q+1$ on a smooth extremal surface $X$ of degree $q+1$.
\end{theorem}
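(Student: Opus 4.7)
The plan is to construct an explicit skew set $\mathcal{S}$ of size $\frac{3}{2}q^2-\frac{1}{2}q+1$ on $X$. Since any skew set extends greedily to a maximal one, this will automatically produce a maximal skew set of at least the claimed size. The arithmetic identity guiding the construction is
\[
\tfrac{3}{2}q^2-\tfrac{1}{2}q+1 \;=\; (q^2+1) + \binom{q}{2},
\]
which suggests assembling $\mathcal{S}$ in two stages.

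In the first stage, I fix two skew lines $\ell_1, \ell_2$ on $X$; by Theorem~5.1.2 of \cite{geometry} we may take $\ell_1 = L_0$ and $\ell_2 = L_{q+2}$ in the notation of Notation~\ref{not:indices}. The $q^2+1$ transversals of $(\ell_1,\ell_2)$ on $X$ are automatically pairwise skew, for if two transversals met at a point they would form a triangle with $\ell_1$, contradicting property~(5) of Theorem~\ref{factsaboutX}. The count of $q^2+1$ transversals follows from the generalized quadrangle axiom applied to the $q^2+1$ star points of $\ell_1$. This already yields a skew subset of $\mathcal{S}$ of size $q^2+1$.

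In the second stage, I adjoin $\binom{q}{2}$ further lines, pairwise skew and each skew to every transversal. The natural source is a Baer-type substructure of $\ell_1$ and $\ell_2$: the $q^2+1$ star points on $\ell_i$ contain a distinguished Baer subline of $q+1$ points fixed by the natural Hermitian involution, and a suitable pairing of Baer points on $\ell_1$ with those on $\ell_2$ selects candidate lines --- most cleanly drawn from batch~4 of Notation~\ref{not:indices}, whose defining equations involve the non-Baer primitive powers $\mu^{a_i}$. The abundance of such Baer-based choices is consistent with the lower bound $\binom{q+1}{3}\cdot 2^{(q^2-q+2)/2}$ on the number of resulting maximal skew sets mentioned in the introduction.

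The main obstacle is this second stage: pinpointing the correct $\binom{q}{2}$ auxiliary lines, and verifying both their mutual skewness and their skewness to all $q^2+1$ transversals. Algebraically this reduces to showing that certain equations of the form $t^{q+1}=c$ over $\mathbb{F}_{q^2}$ have no solution, which can be checked via the relative norm $N_{\mathbb{F}_{q^2}/\mathbb{F}_q}$ (noting that the defining equations in batch~4 already involve the constraint $t^{q+1} = -1-\mu^{(q+1)a_i}$). To discover the correct combinatorial pattern of index choices, I would first inspect the explicit maximal skew sets of size $\frac{3}{2}q^2-\frac{1}{2}q+1$ produced by the Bron--Kerbosch algorithm of Section~\ref{sec:computational-results} for $q=2,3,4$, recognize the pattern, and then verify it holds for all $q$ by the norm argument. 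Once $\mathcal{S}$ is assembled, extending greedily to a maximal skew set completes the proof.
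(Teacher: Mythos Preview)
Your first stage is fine: the $q^2+1$ common transversals of two fixed skew lines $\ell_1,\ell_2$ are pairwise skew by the triangle-free property, exactly as you argue. The problem is the second stage. You do not actually construct the extra $\binom{q}{2}$ lines; you only outline a plan (``inspect the Bron--Kerbosch output for $q=2,3,4$, recognize a pattern, verify via norm equations''). That is a research strategy, not a proof. Worse, the strategy cannot succeed as stated, because already for $q=2$ the set of $q^2+1=5$ transversals of \emph{any} pair of skew lines on the cubic is a \emph{maximal} skew set and admits no extension at all. Concretely, in Schl\"afli's labelling the transversals of $(a_1,a_2)$ are $\{b_3,b_4,b_5,b_6,c_{12}\}$; a line skew to all of $b_3,\ldots,b_6$ must be $a_1$, $a_2$, or $c_{12}$, none of which is skew to $c_{12}$. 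So there is no line to add, and your decomposition $(q^2+1)+\binom{q}{2}$ cannot be realised in the way you describe. The difficulty is structural: any line meeting $\ell_1$ (or $\ell_2$) already hits the transversal through that point, so your auxiliary lines would have to miss both $\ell_1$ and $\ell_2$; but then the triangle-free trick that made stage one work gives you nothing, and you are left with a genuinely hard skewness verification that, at least for $q=2$, has no solution.

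The paper's construction avoids this trap by anchoring on \emph{three} skew lines $m_1,m_2,m_3$ rather than two. It takes only the $q+1$ lines of the opposite ruling on the quadric through $m_1,m_2,m_3$ (not all $q^2+1$ transversals of a pair), and then adds $3\cdot\binom{q}{2}$ further lines, three for each of the $\tfrac{1}{2}q(q-1)$ pairs of dual star chords on the quadric. The point is that every line in the resulting set meets at least \emph{two} of $m_1,m_2,m_3$, so any two of them share a common $m_i$ and are forced to be skew by triangle-freeness. This uniform ``pigeonhole on three anchors'' argument is what makes the skewness proof a single line, and it is exactly what your two-anchor setup cannot supply.
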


\begin{proof}
We will first construct the set of lines of that size, and then show that it is skew set.

Fix three skew lines $m_1$, $m_2$, and $m_3$.
Note, these will not be in the skew set we construct.
These three lines define a unique quadric surface $Q$ \cite[2.12]{Harris}.  By Theorem~5.0.4 of \cite{geometry}, $Q\cap X$ consists of $2q + 2$ lines $\{m_1, \dots, m_{q+1}, \ell_1, \dots \ell_{q+1}\}$ where each $m_i$ is skew to each $m_j$ and intersects each $\ell_k$ and similarly for the $\ell_i$'s.

By Proposition~5.2.2 of \cite{geometry}, there are $q(q-1)$ star chords\footnote{Here, by \emph{star chord} we mean a line which is not on $X$ but which contains $q + 1$ star points of $X$ (in this case, there will be one star point at each intersection $c_i \cap m_j$ and $c_i' \cap m_j$). In the case that $X$ is a Hermitian variety, these are sometimes called Baer sublines \cite[6.2.1]{Hirschfeld98}.} in the ruling of $Q$ which contains $\ell_1, \dots, \ell_{q+1}$.  
By Corollary~5.2.5 of \cite{geometry}, these are in split into $\frac{1}{2}q(q-1)$ pairs,
\[
\left\{\{c_1,c_1'\},\dots,\{c_{\frac{1}{2}q(q-1)},c_{\frac{1}{2}q(q-1)}'\}\right\}
\]
of dual star chords\footnote{Two star chords are dual if and only if they have the same set of $(q+1)^2$ lines through their respective star points which is true if and only if they have at least three lines on the surface in common between their sets of $(q+1)^2$ lines on the surface.}. 
Let $p_{i,j} = m_i \cap c_j$ and $p_{i,j}' = m_i\cap c_j'$.  By Lemma~5.2.1 of \cite{geometry}, these are all star points.
Denote the lines $\overline{p_{1,j}p_{2,j}'}$, $\overline{p_{2,j}p_{3,j}'}$, and $\overline{p_{3,j}p_{1,j}'}$ by $\ell_{q+3j-1}$, $\ell_{q+3j}$, and $\ell_{q+3j+1}$, respectively.  Note, these indices start at $q+2$ and go through $q+\frac{3}{2}q(q-1)+1 = \frac{3}{2}q^2-\frac{1}{2}q+1$.
By Corollary~5.2.5 of \cite{geometry}, these are each lines on $X$.
\begin{center}
\includegraphics{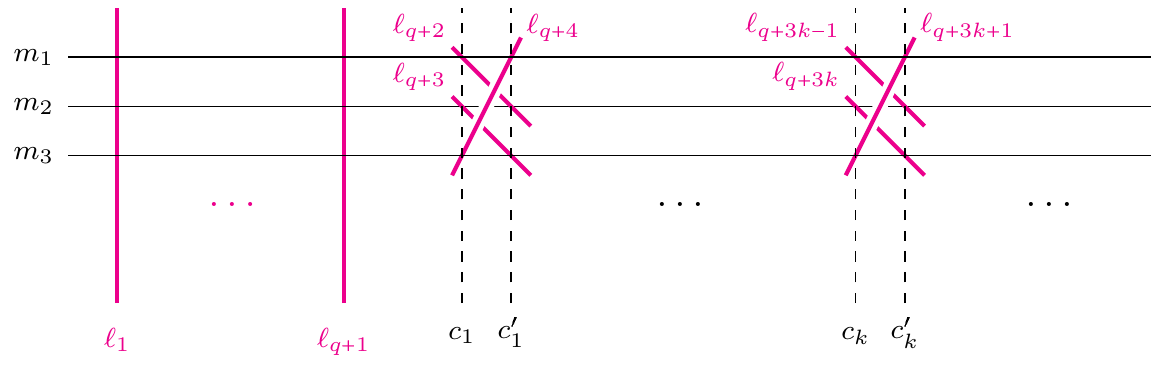}
\end{center}

To conclude the proof, it suffices to show that the set $\{\ell_1,\dots,\ell_{\frac{3}{2}q^2-\frac{1}{2}q+1}\}$ is a skew set.  By construction, each line in $\{\ell_1, \dots, \ell_{q+1}\}$ intersects all three of $m_1,m_2,$ and $m_3$, and each line in $\{\ell_{q+2}, \dots, \ell_{\frac{3}{2}q^2-\frac{1}{2}q+1}\}$ intersects exactly two of the lines $m_1,m_2$ and $m_3$.  Thus, for any pair $\ell_i,\ell_j$ (where $1 \leq i,j \leq \frac{3}{2}q^2-\frac{1}{2}q+1$), we must have that there is some $m_k$ (where $1\leq k \leq 3$) such that $\ell_i$ and $\ell_j$ both intersect $m_k$.  This shows that $\ell_i$ and $\ell_j$ cannot intersect since if they did $\{\ell_i,\ell_j,m_k\}$ would form a triangle.
\end{proof}

In fact, we can show the following lower bound (which is likely very far from sharp):
\begin{proposition}\label{countingourconstruction}
There are at least $\frac{(q+1)q(q-1)}{3} \cdot 2^{\frac{1}{2}q(q-1)}$ maximal skew sets of size at least $\frac{3}{2}q^2-\frac{1}{2}q+1$ on $X =\mathbb{V}(x^{q+1} + y^{q+1} + z^{q+1} + w^{q+1})$.
\end{proposition}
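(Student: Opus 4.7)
The plan is to use the construction of Theorem~\ref{largeskew} inside a single quadric to produce $\frac{(q+1)q(q-1)}{3}\cdot 2^{q(q-1)/2}$ distinct skew sets of size $\frac{3}{2}q^2-\frac{1}{2}q+1$, and then argue that no two of them can be contained in the same maximal skew set. Since every skew set extends to a maximal one, this immediately yields the stated lower bound.

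I would first fix a single quadric $Q$ whose intersection with $X$ consists of $2q+2$ lines: any three skew lines on $X$ (which exist, e.g., from Notation~\ref{not:indices}) determine by \cite[2.12]{Harris} a unique such $Q$, and by Theorem~5.0.4 of \cite{geometry} that intersection contains $q+1$ lines on $X$ in each of the two rulings of $Q$. Applying the construction of Theorem~\ref{largeskew} to every choice of (i) a ruling of $Q$, (ii) an unordered triple $\{m_1,m_2,m_3\}$ of lines from that ruling, and (iii) one of the two cyclic matchings at each of the $q(q-1)/2$ dual star chord pairs, I obtain
\[
2\cdot\binom{q+1}{3}\cdot 2^{q(q-1)/2}=\frac{(q+1)q(q-1)}{3}\cdot 2^{q(q-1)/2}
\]
skew sets. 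These are pairwise distinct because any such $S$ contains the $q+1$ lines of a unique ruling on $Q$, from which $Q$ and the ruling are recovered; the triple is then recovered as the set of lines in the opposite ruling through which the connecting lines of $S$ pass; and each cyclic matching is recovered from the specific triangle of connecting lines appearing in $S$ at that pair.

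The second step is to show that the union $S_1\cup S_2$ of any two such distinct skew sets contains two intersecting lines of $X$. I would split into cases. If $S_1$ and $S_2$ use opposite rulings of $Q$ for their excluded lines, then $S_1\cup S_2$ contains all $2q+2$ lines of $Q\cap X$, and any line from one ruling meets every line from the other on $Q$. If they use the same ruling but different triples $T_1\ne T_2$ with $T_1\cap T_2\ne\varnothing$, pick $a\in T_1\cap T_2$ and $b\in T_1\setminus T_2$ (nonempty since $|T_1|=|T_2|=3$): at each pair $j$, $b$ is either the successor or the predecessor of $a$ in the cyclic order used by $S_1$, yielding a connecting line of $S_1$ through $p_{a,j}$ or $p_{a,j}'$ that is distinct from the analogous line of $S_2$ (because $b\notin T_2$), so the two lines meet at that star point. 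If they use the same ruling and triple but differ in cyclic matching at some pair $j$, then the two matchings together contribute two distinct connecting lines through $p_{a,j}$ for every $a\in\{1,2,3\}$.

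The main obstacle I anticipate is the remaining subcase of Step 2: $S_1$ and $S_2$ use the same ruling but have disjoint triples $T_1\cap T_2=\varnothing$, which arises only when $q\geq 5$. In that subcase, at no single pair $j$ do the connecting lines of $S_1$ and $S_2$ share a star point on $c_j$ or $c_j'$, so the pair-by-pair argument above fails; one must instead find a forced intersection across different pairs, using that each connecting line $\overline{p_{a,j}p_{b,j}'}$ carries $q^2-1$ additional star points beyond its two visible endpoints, one of which should lie on a connecting line of the other skew set at some $j'\ne j$. I expect this to follow from the structural results on star chords and Baer sublines from \cite{geometry}, though the bookkeeping will be more delicate than in the other cases.
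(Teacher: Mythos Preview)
Your approach mirrors the paper's exactly: fix one quadric configuration, enumerate the $2\binom{q+1}{3}\,2^{q(q-1)/2}$ skew sets produced by the construction of Theorem~\ref{largeskew}, and argue that any two of them contain a pair of intersecting lines, so they lie in distinct maximal skew sets. The paper's proof is in fact briefer than yours on the second step. It splits only into ``different rulings'' versus ``same ruling but distinct,'' and for the latter it simply asserts that for some pair $j$ one set contains $\overline{p_{1,j}p_{2,j}'}$ while the other contains $\overline{p_{1,j}p_{3,j}'}$, which meet at $p_{1,j}$. Read literally, that sentence only treats the sub-case in which the two sets use the \emph{same} triple $\{m_1,m_2,m_3\}$ and differ in the cyclic matching at some dual pair; your explicit handling of the overlapping-but-unequal-triples sub-case is already more careful than what the paper writes down.

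The obstacle you isolate---disjoint triples $T_1\cap T_2=\varnothing$, possible only for $q\ge 5$---is precisely the case that neither your write-up nor the paper's printed proof addresses. Your observation that the pair-by-pair argument fails there is correct: each connecting line $\overline{p_{a,j}p_{b,j}'}$ meets the quadric $Q$ only at its two endpoints, hence meets no $m_i$ with $i\notin\{a,b\}$, so connecting lines from $T_1$ and from $T_2$ share no star point on any $c_j$ or $c_j'$. The paper offers no further argument here, and your proposed resolution via the $q^2-1$ additional star points on each connecting line is not obviously sufficient without more structural input. In short, your proposal is at least as complete as the paper's own proof and more candid about where the argument is thin; the disjoint-triples case for $q\ge 5$ is the genuine gap in both.
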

\begin{proof}

Fix a quadric configuration $\mathcal{Q}$ on $X$; i.e., an intersection of a quadric $Q$ with $X$, where $Q \cap X$ is a set of $2q+2$ lines $\{\ell_1, \dots, \ell_{q+1}, m_1, \dots, m_{q+1}\}$, where the $\ell_i$'s are all skew, the $m_i$'s are all skew, and each $\ell_i$ intersects each $m_j$. Fix one of the rulings on $\mathcal{Q}$.  There are two ways to do this.  Once we have fixed a ruling, there are $\binom{q+1}{3}$ options to fix three lines of this ruling, say $m_1$, $m_2$, and $m_3$.  Using the notation above, we have $\frac{1}{2}q(q-1)$ pairs of dual star chords, and for each of these we have two choices of three skew lines: 
$\{\overline{p_{1,j}p_{2,j}'},\,  \overline{p_{2,j}p_{3,j}'},\, \overline{p_{3,j}p_{1,j}'}\}$ or   
$\{\overline{p_{1,j}p_{3,j}'},\, \overline{p_{2,j}p_{1,j}'},\,\overline{p_{3,j}p_{2,j}'}\}$.  Then $\{\ell_1, \dots, \ell_{q+1}\}$ along with a choice of triple for each of the $\frac{1}{2}q(q-1)$ dual star chord pairs gives a skew set by the argument above.  

Finally, note that no two of these skew sets formed as above from a fixed quadric configuration $\mathcal{Q}$ can lie together in a larger skew set.  To see this, first note that if they came from different rulings on $\mathcal{Q}$, then one set would contain $\ell_1, \dots \ell_{q+1}$ and the other would contain $m_1,\dots, m_{q+1}$, which intersect.  
 
If they came from the same ruling but were distinct, then for some $i$, one set will contain $\overline{p_{1,j}p_{2,j}'}$ while the other will contain $\overline{p_{1,j}p_{3,j}'}$. These lines are not skew, since they intersect in $p_{1,i}$.

Thus, we have $2 \cdot \binom{q+1}{3} \cdot 2^{\frac{1}{2}q(q-1)}$ distinct maximal skew sets coming from this quadric configuration (our configurations may not be maximal, but at least each lie in distinct maximal skew sets).
\end{proof}

Since we do not know exactly how they interact, we only count the sets of skew lines coming from a single quadric configuration in Proposition~\ref{countingourconstruction}, but there are likely many distinct skew sets coming from other quadric configurations (and there are $\frac{1}{2}(q^3+1)(q^2+1)q^4$ quadric configurations on $X$ \cite[Corollary~5.1.3]{geometry}).  In the case of $q=2$, all $72$ skew sets of size $\frac{3}{2}q^2-\frac{1}{2}q+1 = 6$ are of this form.  In fact, any skew set of size $6$ will show up as a skew set as constructed above in $\binom{6}{3} = 20$ different quadric configurations (any quadric configuration given by the quadric determined from any $3$ of the six lines in the skew set).  When we multiply our count $\frac{(q+1)q(q-1)}{3} \cdot 2^{\frac{1}{2}q(q-1)} = 4$ above by the $\frac{1}{2}(q^3+1)(q^2+1)q^4 = 360$ quadric configurations, we get $1440$, which is $20$ times the number of maximal skew sets of $6$ lines.

In the case of $q=3$, we have also computationally checked that all maximal skew sets of size $\frac{3}{2}q^2-\frac{1}{2}q+1 = 13$ also come from the construction above.  In this case, each skew set of size $13$ can be constructed in this way using $4$ different quadric configurations.  We have
$$
\left(\frac{(q+1)q(q-1)}{3} \cdot 2^{\frac{1}{2}q(q-1)}\right) \cdot \left(\frac{1}{2}(q^3+1)(q^2+1)q^4\right) = 64 \cdot 11340 = 725760 = 4 \cdot 18144,
$$
so that our bound times the number of quadric configurations gives four times the number of maximal sets of skew lines of size $13$.

For $q= 4$, we computationally verified that skew sets of size $\frac{3}{2}q^2-\frac{1}{2}q+1 = 23$ in the construction above each come from only one quadric configuration, so that we have at least $$\left(\frac{(q+1)q(q-1)}{3} \cdot 2^{\frac{1}{2}q(q-1)}\right) \cdot \left(\frac{1}{2}(q^3+1)(q^2+1)q^4\right) = 181043200$$
skew sets of size $23$ in this case.  These are not maximal, but instead can be uniquely extended to maximal skew sets by adding one line.  We expect that in general, for higher $q$, each of these skew sets should come from only one quadric configuration.  If this is true, then for $q \geq 4$, we would have the much larger lower bound of $\left(\frac{(q+1)q(q-1)}{3} \cdot 2^{\frac{1}{2}q(q-1)}\right) \cdot \left(\frac{1}{2}(q^3+1)(q^2+1)q^4\right)$ for the number of maximal skew sets of size at least $\frac{3}{2}q^2-\frac{1}{2}q+1$.

\bibliographystyle{alpha}
\bibliography{Support_Files/Citations}
\end{document}